\def\w{\tilde}
\def\lim{\operatorname{lim}}
\def\codim{\operatorname{codim}}
\def\id{\operatorname{id}}
\begin{document}

\title*{Connectivity and a Problem of Formal Geometry}
% Use \titlerunning{Short Title} for an abbreviated version of
% your contribution title if the original one is too long
\author{Lucian B\u adescu}

% Use \authorrunning{Short Title} for an abbreviated version of
% your contribution title if the original one is too long
\institute{Lucian B\u adescu \at Universit\`a degli Studi di Genova, Dipartimento di Matematica, Via
Dodecaneso 35, 16146 Genova, Italy, \email{badescu@dima.unige.it}}
%
% Use the package "url.sty" to avoid
% problems with special characters
% used in your e-mail or web address
%
\maketitle

\centerline{\small To Alexandru Dimca and \c Stefan Papadima on the occasion of their sixtieth 
anniversaries}

\vskip2cm

\abstract*{Let $P=\mathbb P^m(e)\times\mathbb P^n(h)$ be a product of weighted projective spaces, and let $\Delta_P$ be the diagonal of $P\times P$. We prove an algebraization result for formal-rational functions on certain closed subvarieties $X$ of  $P\times P$ along the intersection $X\cap\Delta_P$}

\abstract{Let $P=\mathbb P^m(e)\times\mathbb P^n(h)$ be a product of weighted projective spaces, and let $\Delta_P$ be the diagonal of $P\times P$. We prove an algebraization result for formal-rational functions on certain closed subvarieties $X$ of  $P\times P$ along the intersection $X\cap\Delta_P$.}

\section{Introduction}
\label{sec:1}
Let $P$ be a projective irreducible variety and let $f\colon X\to P\times P$ be a morphism from a complete irreducible variety $X$ over an algebraically closed field $k$. Denote by $\Delta_P$ the diagonal of $P\times P$. Then one may ask under which conditions the inverse image $f^{-1}(\Delta_P)$ is connected (resp. non-empty). Here by a connected scheme we shall mean a non-empty scheme whose underlying topological space is connected. The first result in this direction is the famous theorem of Fulton and Hansen (\cite{FH}) which states that the answer to this question is affirmative if  $P=\mathbb P^n$ and if  $\dim f(X)>n$ (resp.  if  $\dim f(X)\geq n$). That result has a lot of interesting geometric applications (see \cite{FL}).

The connectivity result of Fulton and Hansen has been generalized in various directions by Hansen
\cite{Han}, Faltings \cite{F}, \cite{F2}, Debarre \cite{D}, \cite{D2}, \cite{D3}, B\u adescu \cite{B1}, \cite{Bad}, B\u adescu-Repetto \cite{BRe}, and others. 

On the other hand, in \cite{B1} and \cite{Bad} the connectivity results of Fulton-Hansen \cite{FL} and Debarre \cite{D} have been improved to get stronger conclusions involving the $G3$ condition of Hironaka-Matsumura \cite{HM} on the extension of formal-rational functions on $X$ along $f^{-1}(\Delta_P)$ (see Definition \ref{oo} below). The aim of the present paper is to improve the connectivity
result of \cite{BRe} in the same spirit.

To state our main result, let $P$ denote the product $\mathbb P^m(e)\times\mathbb P^n(h)$ of weighted projective spaces $\mathbb P^m(e)$ and $\mathbb P^n(h)$ of weights 
$e=(e_0,\ldots, e_m)$ and $h=(h_0,\ldots,h_n)$ respectively, with $e_i,h_j\geq 1$,  $i=0,\ldots,m$ and $j=0,\ldots,n$. Let $f\colon X\to P\times P$ be a morphism from a complete irreducible variety $X$. Denote by $X_{13}\subseteq\mathbb P^m(e)\times\mathbb P^m(e)$ (resp.  by $X_{24}\subseteq\mathbb P^n(h)\times\mathbb P^n(h)$) the image of $f(X)$ under the projection 
$p_{13}$  of $P\times P=\mathbb P^m(e)\times\mathbb P^n(h)\times\mathbb P^m(e)\times\mathbb P^n(h)$ onto $\mathbb P^m(e)\times\mathbb P^m(e)$
(resp. under the projection $p_{24}$ onto $\mathbb P^n(h)\times\mathbb P^n(h)$).
For the basic properties of weighted projective spaces, see \cite{Do} or \cite{BR}.

Precisely, our aim is to prove the following strengthening of the connectivity result of \cite{BRe} (see Theorem \ref{br} below), and, under a slightly stronger hypothesis, also a generalization of the main result of \cite{Bad}:

\medskip

\noindent{\bf Theorem} (=Theorem \ref{br'}  below) {\em Under the above notation, let  $f\colon X\to P\times P$ be a morphism from a complete irreducible variety $X$, with
$P:=\mathbb P^m(e)\times\mathbb P^n(h)$, the product of the weighted  projective spaces $\mathbb P^m(e)$ and $\mathbb P^n(h)$ over an algebraically closed field $k$. Let $\Delta_P$ be the diagonal of $P\times P$ and set $a:=\max\{m+\dim X_{24},n+\dim X_{13}\}$. If  $\dim f(X)> a$ then $f^{-1}(\Delta_P)$ is $G3$ in $X$, i.e. the canonical injective map $\alpha\colon K(X)\to K(X_{/f^{-1}(\Delta_P)})$, from the field $K(X)$ of rational functions of $X$ to the ring $K(X_{/f^{-1}(\Delta_P)})$ of formal-rational functions of $X$ along $f^{-1}(\Delta_P)$, is an isomorphism $($see the Definition $\ref{oo}$ below$)$.}

\medskip

In other words, this theorem is an extension result of the formal rational functions on $X$ along $X\cap\Delta_P$ to rational functions on $X$. Let me explain why this theorem is an improvement of the connectivity result proved in \cite{BRe}.
By Theorem \ref{(2.1.24)}  below the connectivity result of \cite{BRe} (see Theorem \ref{br} below) is equivalent to saying that the ring $K(f(X)_{/f(X)\cap\Delta_P})$ is a field and the subfield $K(f(X))$ is algebraically closed in  $K(f(X)_{/f(X)\cap\Delta_P})$, while the above theorem is equivalent to saying that the natural map $K(f(X))\to K(f(X)_{/f(X)\cap\Delta_P})$ is an isomorphism.

To prove this result we use an extension theorem for formal-rational functions for the case $P=\mathbb P^m\times\mathbb P^n$ proved in \cite{Bad} (see Theorem \ref{debarre} below) and the connectivity result proved in \cite{BRe} (Theorem \ref{br} below), via some basic known results on formal-rational functions.

Here are two consequences of the above Theorem:

\medskip

{\bf Corollary 1} {\em  Let  $f\colon X\to P\times P$ be a morphism from a complete irreducible variety $X$, with  $P=\mathbb P^m(e)\times \mathbb P^n(h)$ over an algebraically closed field of arbitrary characteristic such that $m\geq n\geq 1$ and $\codim_{P\times P}f(X)< n$. Then $f^{-1}(\Delta)$ is $G3$ in $X$.}

\medskip

In the special case when $P=\mathbb P^m\times\mathbb P^n$ is a product of two ordinary projective spaces over an algebraically closed  field of characteristic zero and $f$ is a closed embedding, Corollary 1  also follows from an old general result of Faltings (see \cite{F}, Satz 8, page 161) proved in the case when $P$ is a complex projective rational homogeneous space. In general $P=\mathbb P^m(e)\times\mathbb P^n(h)$ is  singular, so that Corollary 1 (to our best knowledge) is new.

\medskip

{\bf Corollary 2} {\em  Let $X$ and $Y$ be two closed  irreducible subvarieties of $P=\mathbb P^m(e)\times\mathbb P^n(h)$  such that  $m\geq n\geq 1$ and $\dim X+\dim Y>
2m+n$. Then $X\cap Y$ is $G3$ in $X$ and in $Y$.}

\medskip

Corollary 2 extends to the case $P=\mathbb P^m(e)\times\mathbb P^n(h)$ an old result of Faltings  \cite{F2} proved (by local methods) if $P=\mathbb P^n$.

The paper is organized as follows.
In the first section we recall some known results that will be needed in Section 2. In the second section we prove the theorem and the two corollaries stated above. 

\medskip

{\em Terminology and notation.} Unless otherwise specified, we shall use the standard terminology and notation in algebraic geometry.
We shall work over an algebraically closed ground field $k$ of arbitrary characteristic.

\section{Background material}
\label{sec:2}

In this section we gather together the known results which are going to be used in Sections 2.

\begin{theorem}[B\u adescu-Repetto \cite{BRe}]\label{br} Under the  notation of the introduction, let  $f\colon X\to P\times P$ be a morphism from a complete irreducible variety $X$, with
$P=\mathbb P^m(e)\times\mathbb P^n(h)$ the product of the weighted  projective spaces $\mathbb P^m(e)$ and $\mathbb P^n(h)$ over an algebraically closed field $k$. Let $\Delta_P$ be the diagonal of $P\times P$ and set $a:=\max\{m+\dim X_{24},n+\dim X_{13}\}$. Then the following statements hold true:
\begin{enumerate}\item[\em i)] If  $\dim f(X)\geq a$ then $f^{-1}(\Delta_P)$ is nonempty, and 
  \item[\em ii)] If  $\dim f(X)> a$ then $f^{-1}(\Delta_P)$ is connected.\end{enumerate}\end{theorem}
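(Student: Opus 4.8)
The plan is to read the statement as a Fulton--Hansen theorem for the diagonal of $P\times P$, with $P=\mathbb P^m(e)\times\mathbb P^n(h)$, and to reduce it to the classical Fulton--Hansen theorem for a \emph{single} projective space (\cite{FH}) by two devices: a reduction that removes the weights, and an iteration that imposes the two factor-diagonals one at a time. After reorganizing the four factors of $P\times P$ as $(\mathbb P^m(e)\times\mathbb P^m(e))\times(\mathbb P^n(h)\times\mathbb P^n(h))$, the diagonal becomes $\Delta_P=\Delta_1\times\Delta_2$, where $\Delta_1\subseteq\mathbb P^m(e)\times\mathbb P^m(e)$ and $\Delta_2\subseteq\mathbb P^n(h)\times\mathbb P^n(h)$ are the factor diagonals, of codimensions $m$ and $n$. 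Writing $g_1=p_{13}\circ f$ and $g_2=p_{24}\circ f$, with images $X_{13}$ and $X_{24}$, one has $f^{-1}(\Delta_P)=g_1^{-1}(\Delta_1)\cap g_2^{-1}(\Delta_2)$. Set $d=\dim f(X)$, $d_1=\dim X_{13}$, $d_2=\dim X_{24}$. The first, purely numerical, observation is that $f(X)$ sits injectively in $X_{13}\times X_{24}$, so $d\le d_1+d_2$; hence the hypothesis $d>a$ (which contains both $d>m+d_2$ and $d>n+d_1$) already forces $d_1>m$ and $d_2>n$, while the nonemptiness range $d\ge a$ gives $d_1\ge m$ and $d_2\ge n$. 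Thus \emph{both} inequalities hidden in $d>a$ will be used, and each factor-diagonal already meets the corresponding image in the expected dimension.

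To remove the weights I would first establish the theorem for a single weighted projective space, i.e. Fulton--Hansen for $g\colon W\to\mathbb P^N(w)\times\mathbb P^N(w)$ from a complete irreducible $W$: if $\dim g(W)>N$ then $g^{-1}(\Delta_{\mathbb P^N(w)})$ is connected, and if $\dim g(W)\ge N$ it is nonempty. This single-space statement follows from the classical one through the finite surjective ``power'' morphism $\rho_N\colon\mathbb P^N\to\mathbb P^N(w)$, together with a connectivity-descent lemma pushing forward connectivity (resp.\ nonemptiness) of the diagonal preimage upstairs onto the diagonal preimage downstairs. Once this weighted single-space version is available, the rest of the argument never refers to the weights again, since all it uses about $\mathbb P^m(e)$ and $\mathbb P^n(h)$ is their own Fulton--Hansen property.

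The core is an iteration in two steps. Step~1 (impose $\Delta_1$ globally): applying the single-space theorem to $g_1\colon X\to\mathbb P^m(e)\times\mathbb P^m(e)$, whose image $X_{13}$ has dimension $d_1>m$, shows that $W':=g_1^{-1}(\Delta_1)=f^{-1}(\Delta_1\times(\mathbb P^n(h))^2)$ is connected, and likewise that its base $B_1:=X_{13}\cap\Delta_1=g_1(W')$ is connected (resp.\ nonempty, when $d_1\ge m$). Step~2 (impose $\Delta_2$ fibrewise over $B_1$): consider $g_1|_{W'}\colon W'\to B_1$. For $b\in B_1$ the fibre $W'_b=g_1^{-1}(b)$ maps under $g_2$ onto $p_{24}\bigl(f(X)\cap p_{13}^{-1}(b)\bigr)\subseteq(\mathbb P^n(h))^2$; since $f(X)$ lies injectively in $X_{13}\times X_{24}$, the projection $p_{24}$ is injective on $f(X)\cap p_{13}^{-1}(b)$, so this image has dimension $\ge d-d_1>n$. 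Hence the single-space theorem, applied to each (general) fibre, makes the intersection with $\Delta_2$ connected and nonempty, and, because $B_1$ is connected, this forces $f^{-1}(\Delta_P)=(g_2|_{W'})^{-1}(\Delta_2)$ to be connected. The nonemptiness assertion~(i) is the same argument with the weaker inequalities $d_1\ge m$ and $d-d_1\ge n$, using the ``$\ge$'' half of Fulton--Hansen at each step.

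The main obstacle will be the rigorous form of Step~2, namely a genuinely \emph{relative} Fulton--Hansen statement over $B_1$. The classical theorem requires an irreducible source, whereas $W'$ is only connected and its fibres $W'_b$ may be reducible. I would treat this component-by-component: choose the irreducible components of $W'$ that dominate $B_1$, apply the fibrewise theorem to their Stein factorizations over $B_1$ (over which the general fibre is irreducible), and then re-assemble the pieces using the connectedness of $W'$ proved in Step~1, which links the components along $g_2^{-1}(\Delta_2)$. Propagating connectivity of the general fibre to all of $f^{-1}(\Delta_P)$ then rests on the standard principle that a proper surjection onto a connected base with connected general fibre has connected total space. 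The remaining, more routine, difficulty is the connectivity-descent lemma used to kill the weights: the finite power morphism $\rho_N$ carries the diagonal not isomorphically but only finitely and surjectively onto $\Delta_{\mathbb P^N(w)}$, so one must verify that connectivity (resp.\ nonemptiness) survives this finite push-forward.
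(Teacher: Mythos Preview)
This theorem is not proved in the present paper: it is quoted in the background section from \cite{BRe} and used as a black box in the proof of Theorem~\ref{br'}. So there is no proof here to compare against; I can only assess your outline on its own terms.

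The decisive gap is in Step~2. You impose $\Delta_2$ fibrewise over $B_1$ and then invoke ``the standard principle that a proper surjection onto a connected base with connected general fibre has connected total space.'' That principle is false: take $Y=\mathbb P^1$ and $X=\mathbb P^1\sqcup\{\mathrm{pt}\}$, mapping by the identity on the first piece and sending the point to $0$; the map is proper and surjective, the base is connected, the general fibre is a single point, yet $X$ is disconnected. The correct principle needs \emph{all} fibres connected and nonempty, which here would require $\dim g_2(W'_b)>n$ for \emph{every} $b\in B_1$; over special $b$ the fibre $W'_b$ can jump in dimension while its $g_2$-image drops, and the map $f^{-1}(\Delta_P)\to B_1$ need not even be surjective. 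Your patch via Stein factorization on dominant components still controls only the general fibre, and the ``reassembly'' step presupposes exactly the connectivity you are trying to establish. This is precisely why Debarre's proof of the unweighted case in \cite{D} does \emph{not} iterate over one factor but treats both diagonals by a single global construction; the argument in \cite{BRe} then reduces the weighted statement to Debarre's via the finite covering $u_m(e)\times u_n(h)$, not by a fibrewise induction.

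The removal of weights is also less routine than you suggest. The preimage $(\rho_N\times\rho_N)^{-1}(\Delta_{\mathbb P^N(w)})$ is not $\Delta_{\mathbb P^N}$ but a finite union of graphs of diagonal automorphisms of $\mathbb P^N$. Hence knowing that $(g')^{-1}(\Delta_{\mathbb P^N})$ is connected in a chosen component $W'$ upstairs does not yet show that its image in $W$ is all of $g^{-1}(\Delta_{\mathbb P^N(w)})$; one must use the transitive action of the covering group on those graphs (and on the components of the fibre product) to see this. It can be done, but it is an equivariant argument, not a one-line ``connectivity-descent lemma''.
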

  
  \begin{remark}\label{n=0}  If in Theorem \ref{br} we take $n=0$ then $X_{24}$ is a point and hence $a=\max\{m,\dim X_{13}\}$. Then $P\cong\mathbb P^m$, $f(X)\cong X_{13}$, and therefore the conclusion  of Theorem \ref{br} becomes: 
 \begin{enumerate}\item[i')] If  $\dim f(X)\geq m$ then $f^{-1}(\Delta_P)\neq\varnothing$, and 
  \item[ii')] If  $\dim f(X)> m$ then $f^{-1}(\Delta_P)$ is connected.\end{enumerate}
In other words Theorem \ref{br} for $n=0$ yields exactly the Fulton-Hansen connectivity theorem.\end{remark}

\begin{lemma}\label{CT"}
Let $f\colon X\to P\times P$ be a morphism as in Theorem $\ref{br}$, with $P=\mathbb P^m(e)\times\mathbb P^n(h)$. Assume  $m\geq n\geq 1$. 
\begin{enumerate}\item[\em i)] If $\dim f(X)\geq 2m+n$ then $\dim f(X)\geq a$.
  \item[\em ii)] If $\dim f(X)>2m+ n$ then $\dim f(X)>a$.\end{enumerate}\end{lemma}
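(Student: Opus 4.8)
The plan is to bound both $m+\dim X_{24}$ and $n+\dim X_{13}$ from above in terms of $\dim f(X)$, using the elementary fact that a projection of an irreducible variety cannot increase dimension, combined with the trivial bounds $\dim X_{13}\le 2m$ and $\dim X_{24}\le 2n$ coming from $X_{13}\subseteq\mathbb P^m(e)\times\mathbb P^m(e)$ and $X_{24}\subseteq\mathbb P^n(h)\times\mathbb P^n(h)$. First I would observe that since $X_{24}$ is the image of $f(X)$ under a morphism, $\dim X_{24}\le\dim f(X)$, and therefore (using $\dim X_{13}\le 2m$) one gets $n+\dim X_{13}\le 2m+n$; similarly, since $\dim X_{13}\le\dim f(X)$ and $\dim X_{24}\le 2n\le 2m$ (here the hypothesis $m\ge n$ enters), one gets $m+\dim X_{24}\le m+2n$. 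Hmm — that last bound is $m+2n$, which need not be $\le 2m+n$ unless $n\le m$; indeed $m+2n\le 2m+n\iff n\le m$, so the hypothesis $m\ge n\ge 1$ is exactly what makes both summands of the maximum defining $a$ lie below $2m+n$.

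Putting these together: $a=\max\{m+\dim X_{24},\,n+\dim X_{13}\}\le\max\{m+2n,\,2m+n\}=2m+n$ (the last equality again by $m\ge n$). Hence if $\dim f(X)\ge 2m+n$ then $\dim f(X)\ge a$, proving (i), and if $\dim f(X)>2m+n\ge a$ then $\dim f(X)>a$, proving (ii). So in fact the single inequality $a\le 2m+n$ does all the work, and both statements are immediate corollaries of it.

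The only point requiring a little care — and the closest thing to an obstacle — is justifying $\dim X_{24}\le\dim f(X)$ and $\dim X_{13}\le\dim f(X)$ rigorously: $X_{13}$ and $X_{24}$ are defined as the images of $f(X)$ under the coordinate projections $p_{13}$ and $p_{24}$, and since $f(X)$ is irreducible and complete, these images are irreducible closed subvarieties whose dimension is at most that of $f(X)$ (a dominant morphism of irreducible varieties never raises dimension). Everything else is the arithmetic of $\max$ under the assumption $m\ge n$, which is routine.
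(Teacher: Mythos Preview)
Your proof is correct and follows essentially the same route as the paper: both arguments reduce to the single inequality $a\le 2m+n$, obtained from the trivial bounds $\dim X_{13}\le 2m$, $\dim X_{24}\le 2n$ together with $m\ge n$. Note that your observations $\dim X_{13}\le\dim f(X)$ and $\dim X_{24}\le\dim f(X)$ are never actually used in your chain of inequalities and can be dropped --- the ambient dimension bounds alone suffice, exactly as in the paper's two-line argument.
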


\proof Since $X_{13}\subseteq\mathbb P^m(e)\times\mathbb P^m(e)$ and $X_{24}\subseteq\mathbb P^n(h)\times\mathbb P^n(h)$, $\dim X_{13}\leq 2m$ and $\dim X_{24}\leq 2n$. It follows that 
$a=\max\{m+\dim X_{24},n+\dim X_{13}\}\leq\max\{m+2n,n+2m\}= 2m+n$. \qed

\medskip

Via Lemma \ref{CT"} we get the following Corollary of Theorem \ref{br}:

\begin{corollary}\label{CT'}
Let $f:X\to P\times P$ be a morphism as in Theorem $\ref{br}$, with $X$  a complete irreducible variety and  $P:=\mathbb P^m(e)\times \mathbb P^n(h)$, $m\geq n\geq 1$.  If $\dim f(X)>2m+ n$ then $f^{-1}(\Delta_P)$ is connected.\end{corollary}

\proof Since $\dim X_{13}\leq 2m$, $\dim X_{24}\leq 2n$ and $m\geq n$, then
$a\leq\max\{2m+n,2n+m\}= 2m+n$, and the conclusion follows from Theorem \ref{br} and Lemma \ref{CT"}. \qed

\begin{definition}[Hironaka-Matsumura \cite{HM}, or also  \cite{Ha}, or also \cite{B}, Chapter 9]\label{oo}   Let $X$ be a complete irreducible variety over the
field $k$, and let $Y$ be a closed subvariety of $X$. Denote by $K(X)$ the field of rational functions of $X$, by $X_{/Y}$ the formal completion of $X$ along $Y$, and by $K(X_{/Y})$ the ring of formal-rational functions of $X$ along $Y$.
According to Hironaka and Matsumura \cite{HM} we say that $Y$ is $G3$ in $X$ if the canonical injective map
$\alpha_{X,Y}:K(X)\to K(X_{/Y})$ is an isomorphism of $k$-algebras. In other words, $Y$ is $G3$ in $X$ if every formal rational-function of $X$ along $Y$ extends to a rational function of $X$. We also say that $Y$ is $G2$ in $X$ if the  natural injective map $\alpha_{X,Y}\colon K(X)\to K(X_{/Y})$ makes $K(X_{/Y})$ a finite field extension of $K(X)$.

Let $f\colon X'\to X$ be a proper surjective morphism of irreducible varieties, and let $Y\subset X$
and $Y'\subset X'$ be closed subvarieties such that $f(Y')\subseteq Y$. Then one can define a canonical
map of $k$-algebras $\w f^*\colon K(X_{/Y})\to K(X'_{/Y'})$ (pull back of formal-rational functions, see \cite{HM}, or also \cite{B}, Corollary 9.8) rendering commutative the following diagram:

\begin{equation*}
\begin{CD}
K(X)@>f^*>> K(X')\\
@V\alpha_{X,Y} VV @ VV\alpha_{X',Y'}V\\
K(X_{/Y})@ >\w f^*>> K(X'_{/Y'})\\
\end{CD}
\end{equation*}\end{definition}

\begin{proposition}[Hironaka--Matsumura \cite{HM}, or also \cite{B}, Cor. 9.10]\label{(2.1.11)} Let $X$ be an irreducible algebraic variety over $k$,
and let $Y$ be a closed subvariety of $X$. Let $u\colon \w X\to X$ be the $($birational$)$
normalization of $X$. Then $K(X_{/Y})$ is a field if and only if
$u^{-1}(Y)$ is connected. \end{proposition}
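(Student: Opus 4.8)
The plan is to reduce everything to the normal case and then invoke the fundamental theorem of Zariski on connectedness together with the characterization of when a ring of formal-rational functions on a normal variety is a field. First I would observe that, since $u\colon\widetilde X\to X$ is birational, the pull-back map $\widetilde u^*\colon K(X_{/Y})\to K(\widetilde X_{/u^{-1}(Y)})$ is injective, and in fact one shows it is an isomorphism: a formal-rational function on $X$ along $Y$ is, by definition, a compatible system of rational functions on the infinitesimal neighborhoods, and since normalization is finite and birational it induces isomorphisms on the completions of the local rings at the generic points involved, so formal-rational functions transport faithfully across $u$. (This is exactly the content of \cite[Cor. 9.8]{B} applied to the proper birational morphism $u$.) Hence $K(X_{/Y})$ is a field if and only if $K(\widetilde X_{/u^{-1}(Y)})$ is a field, and we also have $K(X)\cong K(\widetilde X)$. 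This reduces the statement to the case where $X$ itself is normal, with $Y$ replaced by $u^{-1}(Y)$.

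So assume now $X$ is normal. I would then use the general principle that $K(X_{/Y})$ is always a direct product of fields indexed by the connected components of the formal completion $X_{/Y}$ — equivalently, by the connected components of $Y$ when $X$ is normal along $Y$. The key input here is that for a normal variety the local ring at a point, hence its completion, is a domain, and more relevantly that the ring of formal-rational functions on each connected component is a field because the formal completion along a connected closed subset of a normal variety is itself "connected" in the appropriate formal sense. Concretely, $K(X_{/Y})=\prod_i K(X_{/Y_i})$ where $Y=\coprod_i Y_i$ is the decomposition into connected components, and each factor is a field. Therefore $K(X_{/Y})$ is a field precisely when there is a single factor, i.e. when $Y$ is connected. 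Combining with the first paragraph: $K(X_{/Y})$ is a field iff $u^{-1}(Y)$ is connected.

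The main obstacle, and the step deserving the most care, is the claim that on a normal variety the ring of formal-rational functions along a connected closed subvariety is a field. This is not a one-line matter: it rests on the fact that $H^0$ of the structure sheaf of the formal completion of a normal variety along a connected closed subvariety has no nontrivial idempotents, which in turn uses normality in an essential way (it fails badly for, say, a nodal curve completed at the node, which is exactly why the normalization appears in the statement). The cleanest route is to cite \cite{HM} directly, or \cite[Chapter 9]{B}, where this is established; in a self-contained account one would argue via the theorem on formal functions / Grothendieck's existence theorem relating the formal completion to the algebraic one and the connectedness of the fibers of $u$. I would also take care in the first paragraph to justify that $\widetilde u^*$ is genuinely an isomorphism and not merely injective, since it is precisely the surjectivity there — every formal-rational function upstairs descends — that uses birationality of $u$ rather than just properness.
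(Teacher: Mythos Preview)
The paper does not prove this proposition. It is listed in Section~\ref{sec:2} (``Background material'') as a known result quoted from \cite{HM} and \cite{B}, with no argument given, so there is no proof in the paper to compare your proposal against.

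That said, your outline matches the standard argument in those references. The reduction to the normal case is correct and in fact is an immediate consequence of Theorem~\ref{(2.1.12)} in the paper: applying it to the birational morphism $u$ gives $K(\widetilde X_{/u^{-1}(Y)})\cong[K(\widetilde X)\otimes_{K(X)}K(X_{/Y})]_0=[K(X_{/Y})]_0=K(X_{/Y})$, since $K(\widetilde X)=K(X)$ and $K(X_{/Y})$ is already its own total ring of fractions. One small correction: Corollary~9.8 of \cite{B}, as cited in Definition~\ref{oo}, only constructs the pull-back map $\widetilde f^*$; the isomorphism for birational $f$ is Theorem~\ref{(2.1.12)} (i.e., \cite[Thm.~9.11]{B} or \cite{HM}), not Corollary~9.8. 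Your identification of the genuinely nontrivial step --- that on a \emph{normal} variety the ring of formal-rational functions along a \emph{connected} closed subvariety is a field --- is accurate, and your decision to defer to \cite{HM} or \cite[Ch.~9]{B} for it is exactly what the paper itself does.
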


\begin{theorem}[Hironaka-Matsumura \cite{HM}, or also \cite{B}, Thm. 9.11]\label{(2.1.12)} Let $f\colon X'\to X$ be
a proper surjective morphism of irreducible varieties over $k$. Then for every closed
subvariety $Y$ of $X$ there is a canonical isomorphism
$$K({X'}_{/f^{-1}(Y)})\cong[K(X')\otimes_{K(X)}K({X}_{/Y})]_0,$$
where $[A]_0$ denotes the
total ring of fractions of a commutative unitary ring $A$.\end{theorem}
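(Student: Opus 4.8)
Write $Y'=f^{-1}(Y)$ and abbreviate $\alpha=\alpha_{X,Y}$, $\alpha'=\alpha_{X',Y'}$. The plan is to exhibit the claimed isomorphism as the localisation of one explicit canonical ring map. By the commutative square of Definition \ref{oo} the $k$-algebra maps $\alpha'\colon K(X')\to K(X'_{/Y'})$ and $\w f^{*}\colon K(X_{/Y})\to K(X'_{/Y'})$ agree on $K(X)$, since $\alpha'\circ f^{*}=\w f^{*}\circ\alpha$; hence the rule $b\otimes\xi\mapsto\alpha'(b)\cdot\w f^{*}(\xi)$ defines a $K(X)$-algebra homomorphism
$$\phi\colon K(X')\otimes_{K(X)}K(X_{/Y})\longrightarrow K(X'_{/Y'}).$$
Once I know that $\phi$ is injective and that every element of $K(X'_{/Y'})$ is of the form $\phi(u)/\phi(v)$ with $\phi(v)$ a non-zero-divisor, it follows that $\phi$ carries non-zero-divisors to non-zero-divisors and extends uniquely to an isomorphism $[K(X')\otimes_{K(X)}K(X_{/Y})]_{0}\xrightarrow{\ \sim\ }K(X'_{/Y'})$ of total rings of fractions, which is exactly the assertion.

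Second, I would trade the global formal geometry for a computation over the base and bring in properness. Since a formal-rational function is determined by its restrictions to an affine open cover, I may take $X=\Spec A$ affine, let $\sI\subseteq A$ be the ideal cutting out $Y$, and write $\hat A$ for the $\sI$-adic completion. As $f$ is proper, $\sB:=f_{*}\sO_{X'}$ is a coherent $\sO_{X}$-module, and Grothendieck's theorem on formal functions identifies the pushforward of the formal structure sheaf with the $Y$-adic completion of $\sB$, namely $\w f_{*}\hat{\sO}_{X'}\cong\hat{\sB}$. This is the bridge: it presents the formal-regular functions on $X'$ along $Y'$ as formal sections of the coherent sheaf $\sB$ along $Y$, so that they are controlled by $K(X_{/Y})$ together with the finitely many generators of $\sB$ as an $A$-algebra, i.e. by finitely many ``fibre directions''.

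Third, I would reduce to two building-block cases through the Stein factorisation $X'\xrightarrow{h}X''\xrightarrow{g}X$, with $g$ finite and $h_{*}\sO_{X'}=\sO_{X''}$ (so $K(X'')$ is the algebraic closure of $K(X)$ in $K(X')$). For the finite $g$, the sheaf $g_{*}\sO_{X''}$ is a finite $A$-algebra $C$ whose completion $\hat C$ is a finite $\hat A$-module; choosing a $K(X)$-basis of the finite extension $K(X'')/K(X)$ makes both the injectivity (linear disjointness of that basis over the completed ring) and the surjectivity (every formal-rational function is a $\hat C$-combination of the basis with a denominator cleared from $A$) transparent, giving $K(X''_{/g^{-1}(Y)})\cong[K(X'')\otimes_{K(X)}K(X_{/Y})]_{0}$. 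For the connected-fibre $h$, the theorem on formal functions yields $\w h_{*}\hat{\sO}_{X'}\cong\hat{\sO}_{X''}$, so the formal-regular functions do not change, and since $K(X'')$ is algebraically closed in $K(X')$ one obtains $K(X'_{/Y'})\cong[K(X')\otimes_{K(X'')}K(X''_{/g^{-1}(Y)})]_{0}$. Composing the two identifications, via transitivity of the tensor product and of the total-ring-of-fractions operation, produces the desired isomorphism.

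The hard part will be the surjectivity together with the non-zero-divisor bookkeeping in the passage to total rings of fractions. An arbitrary element of $K(X'_{/Y'})$ is a priori only a compatible system of locally defined fractions of formal-regular functions; I must show it can be written globally as a single fraction whose numerator and denominator come from $K(X')\otimes_{K(X)}\hat A$ (equivalently from $\hat C$ in the finite model), and, crucially, that the chosen denominator remains a non-zero-divisor after $\sI$-adic completion. The first point is what the finiteness in the theorem on formal functions buys me, namely that finitely many fibre coordinates suffice; the second requires controlling the associated primes of $\hat C$ and of the completed local rings to guarantee that completion manufactures no new zero-divisors out of the cleared denominator. Making these finiteness and flatness inputs interact correctly is the crux of the argument.
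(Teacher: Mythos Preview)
The paper does not prove this theorem at all: it is stated in Section~\ref{sec:2} as background material, with proof deferred to the references \cite{HM} and \cite{B}, Theorem~9.11. There is therefore no ``paper's own proof'' to compare your proposal against.

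That said, your sketch follows the standard architecture of the Hironaka--Matsumura argument (Stein factorisation into a finite map and a map with connected fibres, the theorem on formal functions to control $f_*\sO_{X'}$ upon completion, and linear-disjointness considerations for the finite piece). If you want to turn this into a self-contained proof you should consult \cite{HM} or \cite{B}, Chapter~9, directly; the delicate points you flag at the end---global presentation of a formal-rational function as a single fraction and preservation of non-zero-divisors under completion---are exactly where the cited sources do the real work, and your outline does not yet supply those arguments.
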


\begin{corollary}\label{(2.1.14)}  Under the hypotheses of Theorem
$\ref{(2.1.12)}$, assume that $\,Y$ is $G3$ in $X$. Then $f^{-1}(Y)$ is $G3$ in
$X'$.\end{corollary}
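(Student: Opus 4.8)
The plan is to read the statement off from Theorem \ref{(2.1.12)} together with the $G3$ hypothesis. Apply Theorem \ref{(2.1.12)} to the proper surjective morphism $f\colon X'\to X$ and the closed subvariety $Y\subseteq X$: it yields a canonical isomorphism
$$K({X'}_{/f^{-1}(Y)})\;\cong\;[K(X')\otimes_{K(X)}K(X_{/Y})]_0 ,$$
which, being canonical, is compatible with the maps of Definition \ref{oo}; in particular, under it the canonical injection $\alpha_{X',f^{-1}(Y)}\colon K(X')\to K({X'}_{/f^{-1}(Y)})$ corresponds to the map $x\mapsto x\otimes 1$ (followed by the localization into the total ring of fractions).

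Now bring in the hypothesis. Since $Y$ is $G3$ in $X$, by Definition \ref{oo} the canonical map $\alpha_{X,Y}\colon K(X)\to K(X_{/Y})$ is an isomorphism of $k$-algebras; thus $K(X_{/Y})$, viewed as a $K(X)$-algebra via $\alpha_{X,Y}$, is nothing but $K(X)$. Therefore
$$K(X')\otimes_{K(X)}K(X_{/Y})\;\cong\;K(X')\otimes_{K(X)}K(X)\;=\;K(X') ;$$
concretely, every element of the left-hand side has the form $x\otimes 1$ with $x\in K(X')$, since $x\otimes\alpha_{X,Y}(a)=(f^*(a)\,x)\otimes 1$ for $a\in K(X)$. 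In particular the tensor product is already a field, so passing to its total ring of fractions changes nothing. Combining this with the displayed isomorphism and the compatibility recalled above, the map $\alpha_{X',f^{-1}(Y)}$ --- corresponding to $x\mapsto x\otimes 1$ --- is surjective, hence an isomorphism. By Definition \ref{oo} this is exactly the assertion that $f^{-1}(Y)$ is $G3$ in $X'$.

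The only point requiring care is the compatibility of the canonical isomorphism of Theorem \ref{(2.1.12)} with the maps $\alpha_{X,Y}$, $\alpha_{X',f^{-1}(Y)}$, $\w f^*$ of Definition \ref{oo} --- concretely, the identification of $\alpha_{X',f^{-1}(Y)}$ with $x\mapsto x\otimes 1$; this is established as part of the proof of Theorem \ref{(2.1.12)} in \cite{HM} (see also \cite{B}, Chapter 9), and once it is granted the rest is a one-line computation. If one prefers to argue only with the commutative square of Definition \ref{oo}: from $\w f^*\circ\alpha_{X,Y}=\alpha_{X',f^{-1}(Y)}\circ f^*$ and $\alpha_{X,Y}$ being an isomorphism one gets $\Im(\w f^*)\subseteq\Im(\alpha_{X',f^{-1}(Y)})$, so the subring of $K({X'}_{/f^{-1}(Y)})$ generated by these two images is $\Im(\alpha_{X',f^{-1}(Y)})\cong K(X')$, a field; since by Theorem \ref{(2.1.12)} $K({X'}_{/f^{-1}(Y)})$ is the total ring of fractions of that subring, it coincides with $\Im(\alpha_{X',f^{-1}(Y)})$, and $G3$ follows.
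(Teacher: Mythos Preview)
The paper states this corollary as background material without proof, deferring to \cite{HM} and \cite{B}, Chapter~9. Your argument is correct and is precisely the standard deduction from Theorem~\ref{(2.1.12)}: once $K(X_{/Y})\cong K(X)$ via $\alpha_{X,Y}$, the tensor product collapses to $K(X')$, its total ring of fractions is itself, and the compatibility of the canonical isomorphism with $\alpha_{X',f^{-1}(Y)}$ (which you rightly flag as the one point needing input from the construction in \cite{HM} or \cite{B}) finishes the job. Your alternative argument via the commutative square is also fine, though note that it too uses slightly more than the bare statement of Theorem~\ref{(2.1.12)}: one needs that the isomorphism there is realized by the natural map $K(X')\otimes_{K(X)}K(X_{/Y})\to K(X'_{/f^{-1}(Y)})$ sending $x\otimes\zeta$ to $\alpha_{X',f^{-1}(Y)}(x)\cdot\w f^*(\zeta)$, so that $K(X'_{/f^{-1}(Y)})$ is indeed the total ring of fractions of the subring generated by the two images.
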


\begin{theorem}[B\u adescu--Schneider \cite{BSch}, or also \cite{B}, Cor. 9.22]\label{(2.1.24)}  Let $(X,Y)$ be a pair consisting of a complete irreducible variety $X$ over $k$ and
a closed subvariety $Y$ of $X$. The following conditions are
equivalent:
\begin{enumerate}
\item[\em i)] For every proper surjective morphism $f\colon X'\to X$
from an irreducible variety $X'$, $f^{-1}(Y)$ is connected.
\item[\em ii)] $K({X}_{/Y})$ is a field and $K(X)$ is algebraically
closed in $K({X}_{/Y})$.
\end{enumerate}
\end{theorem}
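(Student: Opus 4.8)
The plan is to prove the two implications (i)$\Rightarrow$(ii) and (ii)$\Rightarrow$(i) separately, the common engine being Proposition~\ref{(2.1.11)} (the normalization criterion: for a pair $(Z,W)$ with normalization $u\colon\w Z\to Z$, the ring $K(Z_{/W})$ is a field if and only if $u^{-1}(W)$ is connected) and Theorem~\ref{(2.1.12)} (the base-change formula $K(X'_{/f^{-1}(Y)})\cong[K(X')\otimes_{K(X)}K(X_{/Y})]_0$). Throughout I abbreviate $k_0:=K(X)$ and $F:=K(X_{/Y})$.

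For (i)$\Rightarrow$(ii) I would first apply the hypothesis to the normalization $u\colon\w X\to X$, which is a proper surjective morphism from an irreducible variety; thus $u^{-1}(Y)$ is connected and Proposition~\ref{(2.1.11)} immediately yields that $F$ is a field. To see that $k_0$ is algebraically closed in $F$, suppose not and pick $\xi\in F$ algebraic over $k_0$ with $\xi\notin k_0$; then $L:=k_0(\xi)$ is a finite extension of $k_0$ of degree $d\geq2$. Let $f\colon X'\to X$ be the normalization of $X$ in $L$, a finite (hence proper) surjective morphism from a normal irreducible variety with $K(X')=L$, so that (i) forces $f^{-1}(Y)$ to be connected. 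On the other hand, Theorem~\ref{(2.1.12)} identifies $K(X'_{/f^{-1}(Y)})$ with $[L\otimes_{k_0}F]_0$, and writing $p$ for the minimal polynomial of $\xi$ over $k_0$ we have $L\otimes_{k_0}F\cong F[t]/(p(t))$. Since $\xi\in F$ is a root of $p$, the polynomial $p$ is reducible over $F$, so this $d$-dimensional $F$-algebra is not a field; being Artinian it equals its own total ring of fractions, whence $K(X'_{/f^{-1}(Y)})$ is not a field. As $X'$ is normal, Proposition~\ref{(2.1.11)} then forces $f^{-1}(Y)$ to be disconnected, a contradiction. Hence $k_0$ is algebraically closed in $F$.

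For (ii)$\Rightarrow$(i), let $f\colon X'\to X$ be an arbitrary proper surjective morphism from an irreducible variety. I would first reduce to the case $X'$ normal: if $\nu\colon Z\to X'$ is the normalization, then $g:=f\circ\nu\colon Z\to X$ is again proper surjective with $Z$ normal and $g^{-1}(Y)=\nu^{-1}(f^{-1}(Y))$; since $\nu$ is surjective, $f^{-1}(Y)=\nu(g^{-1}(Y))$ is the continuous image of $g^{-1}(Y)$, hence connected whenever $g^{-1}(Y)$ is. Assuming then $X'$ normal, its normalization is the identity, so Proposition~\ref{(2.1.11)} says that $f^{-1}(Y)$ is connected precisely when $K(X'_{/f^{-1}(Y)})$ is a field; by Theorem~\ref{(2.1.12)} this ring is $[E\otimes_{k_0}F]_0$, where $E:=K(X')$ is a finitely generated field extension of $k_0$. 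Thus the whole implication reduces to the purely algebraic claim that, $F$ being a field in which $k_0$ is algebraically closed, the ring $E\otimes_{k_0}F$ is a domain, so that $[E\otimes_{k_0}F]_0$ is its field of fractions.

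I expect this algebraic claim to be the heart of the matter. First I would pick a transcendence basis $x_1,\dots,x_r$ of $E/k_0$; the purely transcendental directions are harmless, contributing only a localization whose total ring of fractions is the field $F(x_1,\dots,x_r)$, in which $k_0(x_1,\dots,x_r)$ is again algebraically closed (the standard persistence of this property under adjunction of independent transcendentals), so one reduces to $E/k_0$ finite. For a primitive finite extension $E=k_0(\alpha)$ with minimal polynomial $p$, the key point is that any monic factor of $p$ in $F[t]$ is a product of some of the linear factors of $p$ over an algebraic closure, so its coefficients are symmetric functions of roots of $p$, hence algebraic over $k_0$; lying in $F$, they lie in $k_0$ by hypothesis, so $p$ stays irreducible over $F$ and $E\otimes_{k_0}F\cong F[t]/(p(t))=F(\alpha)$ is a field. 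The genuinely delicate bookkeeping—and the step I expect to cost the most—is that a finite extension need not be primitive in the inseparable case, so one writes $E=k_0(\alpha_1,\dots,\alpha_s)$ and iterates the above along the tower $k_0\subset k_0(\alpha_1)\subset\cdots\subset E$; the point requiring care is to verify that algebraic closedness propagates, i.e. that at each stage the intermediate field $E_{i-1}$ is algebraically closed in the field $E_{i-1}\otimes_{k_0}F$, so that the primitive-element argument applies afresh to $\alpha_i$. Once this lemma is in hand, $E\otimes_{k_0}F$ is a domain, $K(X'_{/f^{-1}(Y)})$ is a field, and Proposition~\ref{(2.1.11)} delivers the connectedness of $f^{-1}(Y)$, completing the proof.
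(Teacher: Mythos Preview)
The paper does not prove this theorem; it is quoted as background from \cite{BSch} (see also \cite{B}, Corollary~9.22), so there is no proof in the paper to compare your argument against directly.

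That said, your argument for (i)$\Rightarrow$(ii) is correct, but your argument for (ii)$\Rightarrow$(i) contains a genuine gap in positive characteristic. You reduce everything to the purely algebraic claim that if $k_0$ is algebraically closed in a field $F$, then $E\otimes_{k_0}F$ is a domain for every finitely generated field extension $E/k_0$. This claim is \emph{false} in general: it is equivalent to $F/k_0$ being a \emph{regular} extension (in the sense of Weil--Bourbaki), which additionally requires $F/k_0$ to be separable. Your primitive-element step (the minimal polynomial stays irreducible over $F$) is correct, but the iteration breaks exactly where you flag it as ``the point requiring care'': algebraic closedness need \emph{not} propagate through an inseparable simple extension. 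For a concrete counterexample take $p=2$, $k_0=\mathbb{F}_2(s,t)$, and $F=\mathbb{F}_2(s,c_0,c_1)$ with the embedding given by $t\mapsto c_0^2+sc_1^2$. The projective conic $c_0^2+sc_1^2+tc_2^2=0$ over $k_0$ is integral with $H^0=k_0$, so $k_0$ is algebraically closed in $F$; yet $t^{1/2}=c_0+s^{1/2}c_1$ lies in $F(s^{1/2})$, so $k_0(s^{1/2})$ is \emph{not} algebraically closed in $F(s^{1/2})$, and $k_0(s^{1/2},t^{1/2})\otimes_{k_0}F$ has nilpotents.

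What rescues the theorem is a geometric input you never invoke: since $X$ is of finite type over a field it is excellent, and excellence forces the extension $K(X_{/Y})/K(X)$ to be separable (formal fibres of excellent rings are geometrically regular). With separability in hand, hypothesis (ii) says precisely that $K(X_{/Y})/K(X)$ is a regular extension, and then $E\otimes_{K(X)}K(X_{/Y})$ is indeed a domain for every extension $E/K(X)$, so your reduction via Theorem~\ref{(2.1.12)} and Proposition~\ref{(2.1.11)} goes through. Equivalently: ``$k_0$ algebraically closed in $F$'' makes $F/k_0$ primary, so $E\otimes_{k_0}F$ has irreducible spectrum; separability of $F/k_0$ makes it reduced; together these give a domain.
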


\begin{theorem}[B\u adescu \cite{Bad}]\label{debarre} Under the notation of Theorem $\ref{br}$ let  $f\colon X\to P\times P$ be a morphism from a complete irreducible variety $X$, with
$P=\mathbb P^m\times\mathbb P^n$ a product of the ordinary projective spaces $\mathbb P^m$ and $\mathbb P^n$ over $k$ and let $\Delta_P$ be the diagonal of
$P\times P$. Assume that $\dim f(X)>m+n+1$, $\dim X_{13}>m$ and $\dim X_{24}>n$. Then $f^{-1}(\Delta_P)$ is $G3$ in $X$.\end{theorem}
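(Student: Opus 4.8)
The plan is to split the $G3$ assertion into a ``connectivity half'' and a ``finiteness half'', obtaining the latter by exploiting the two projections $p_{13}$ and $p_{24}$ one diagonal at a time. First I would reduce to the case of a closed embedding. Put $\bar X:=f(X)\subseteq P\times P$ and $Y:=\bar X\cap\Delta_P$; the induced morphism $f'\colon X\to\bar X$ is proper and surjective, $f^{-1}(\Delta_P)=(f')^{-1}(Y)$, and $\bar X$ carries the same dimension and the same projections $X_{13},X_{24}$ as $f(X)$, so all three numerical hypotheses survive. By Corollary~\ref{(2.1.14)} it then suffices to prove that $Y$ is $G3$ in $\bar X$; thus from now on I assume $X\subseteq P\times P$ is closed, that $f$ is the inclusion, and that $Y=X\cap\Delta_P$, the aim being that the canonical injection $\alpha\colon K(X)\hookrightarrow K(X_{/Y})$ be surjective.

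For the connectivity half I would note that the hypotheses are stable under proper surjective base change: for any proper surjective $g\colon X'\to X$ with $X'$ irreducible (hence complete) the composite $X'\to P\times P$ still has image $X$, with unchanged dimension and projections. Since $\dim X>m+n+1$, $\dim X_{13}>m$ and $\dim X_{24}>n$, the connectedness theorem for products of projective spaces (Debarre~\cite{D}; compare Theorem~\ref{br}) shows that $g^{-1}(Y)$ is connected. As this holds for every such $g$, Theorem~\ref{(2.1.24)} gives that $K(X_{/Y})$ is a field and that $K(X)$ is algebraically closed in it. It therefore remains only to prove that $K(X_{/Y})$ is \emph{algebraic} over $K(X)$, i.e. that $Y$ is $G2$ in $X$: an algebraic extension in which the base field is algebraically closed is trivial, and --- since a purely inseparable element is in particular algebraic --- this last implication is valid in every characteristic.

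The heart of the proof is this finiteness statement, and it is where the product structure is indispensable. The diagonals $\Delta_{\mathbb P^m}\subseteq\mathbb P^m\times\mathbb P^m$ and $\Delta_{\mathbb P^n}\subseteq\mathbb P^n\times\mathbb P^n$ have ample normal bundles $T_{\mathbb P^m}$ and $T_{\mathbb P^n}$, which is exactly what drives the single--projective--space extension theorem of \cite{B1}. Accordingly I would first apply that theorem to $p_{13}|_X\colon X\to\mathbb P^m\times\mathbb P^m$ --- whose image has dimension $\dim X_{13}>m$, the bound it requires --- to conclude that $Y_1:=X\cap p_{13}^{-1}(\Delta_{\mathbb P^m})$ is $G3$ in $X$; then, working inside $Y_1$, I would apply it again to the second projection, using $\dim X_{24}>n$, to obtain that $Y=Y_1\cap p_{24}^{-1}(\Delta_{\mathbb P^n})$ is $G2$ in $Y_1$. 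Combining the two steps through the transitivity of the extension property along the chain $Y\subseteq Y_1\subseteq X$ would then give that $Y$ is $G2$ in $X$, and hence, with the previous paragraph, $G3$. Here the hypothesis $\dim X>m+n+1$ enters to guarantee that $Y_1$ and $Y$ have dimension $\ge 2$, as these extension theorems demand.

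I expect the main obstacle to be precisely this ``one factor at a time'' reduction. The difficulty is structural: the normal bundle of the \emph{full} diagonal is $N_{\Delta_P}\cong T_{\mathbb P^m}\oplus T_{\mathbb P^n}$ pulled back from the two factors, which is ample only in each factor direction and not globally, so no single ample--normal--bundle criterion applies directly. Carrying out the second step therefore requires controlling the intermediate variety $Y_1$ --- in particular ensuring that a suitable irreducible component of it dominates a subvariety of $\mathbb P^n\times\mathbb P^n$ of dimension $>n$, so that \cite{B1} is applicable --- and establishing the transitivity of the extension property along $Y\subseteq Y_1\subseteq X$, for which one can compare the relevant rings of formal-rational functions via Theorem~\ref{(2.1.12)}. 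These are exactly the points at which the two separate hypotheses $\dim X_{13}>m$ and $\dim X_{24}>n$, rather than a single bound on $\dim X$, are genuinely used.
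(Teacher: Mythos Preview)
This theorem is not proved in the present paper: it appears in Section~\ref{sec:2} (``Background material'') as a result quoted from \cite{Bad}, and the paper supplies no proof of it here --- it is only \emph{used}, in the proof of Theorem~\ref{br'}. There is therefore no proof in this paper against which to compare your proposal.

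On the merits of your sketch: the connectivity half (reduction to a closed subvariety via Corollary~\ref{(2.1.14)}, stability of the hypotheses under proper surjective base change, then Theorem~\ref{(2.1.24)}) is sound and is the standard route to reduce $G3$ to $G2$. The finiteness half, however, has a gap that you yourself flag but do not close. After obtaining that $Y_1=X\cap p_{13}^{-1}(\Delta_{\mathbb P^m})$ is $G3$ in $X$, the second application of the single-projective-space theorem to $p_{24}$ requires an \emph{irreducible} variety whose image in $\mathbb P^n\times\mathbb P^n$ has dimension $>n$. The scheme $Y_1$ need not be irreducible, and for a given component $Y_1'$ there is no a priori reason that $\dim p_{24}(Y_1')>n$: the bound $\dim Y_1'\geq\dim X-m>n+1$ controls $\dim Y_1'$ but not the dimension of its projection. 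Moreover the transitivity step --- passing from ``$Y$ is $G2$ in some $Y_1'$'' and ``$Y_1$ is $G3$ in $X$'' to ``$Y$ is $G2$ in $X$'' --- is not a formal consequence of Theorem~\ref{(2.1.12)} when $Y_1$ is reducible or when $Y$ fails to meet the chosen component $Y_1'$; Theorem~\ref{(2.1.12)} compares $K(X'_{/f^{-1}(Y)})$ with $K(X_{/Y})$ for a morphism $f$, not rings of formal functions along a nested pair $Y\subset Y_1\subset X$. As written, your proposal correctly identifies the obstacle but does not overcome it; a complete argument would need either a mechanism guaranteeing that a suitable component of $Y_1$ dominates enough of $\mathbb P^n\times\mathbb P^n$, or an approach that treats both diagonal factors simultaneously rather than sequentially.
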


\begin{theorem}[B\u adescu--Schneider \cite{BSch}, or also \cite{B}, Thm. 9.21]\label{(2.1.23)} 
Let $\zeta\in K({X}_{/Y})$ be a formal-rational function of an irreducible
variety $X$ along a closed subvariety $Y$ of $X$ such that
$K({X}_{/Y})$ is a field. Then the following two conditions are equivalent:
\begin{enumerate}\item[\em i)] $\zeta$ is algebraic over $K(X)$.
\item[\em ii)]  There is a proper surjective morphism $f\colon X'\to X$ from
an irreducible variety $X'$ and a closed
subvariety $Y'$ of $X'$ such that $f(Y')\subseteq Y$ and
$\tilde{f}^*(\zeta)\in K(X')$ $($more precisely, there exists a rational
function $t\in K(X')$ such that $\w f^*(\zeta)=\alpha_{X',Y'}(t))$.
\end{enumerate}\end{theorem}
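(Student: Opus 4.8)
The plan is to prove the two implications separately, in each case using Theorem \ref{(2.1.12)} to convert statements about formal-rational functions into commutative algebra of the $K(X)$-algebra $K(X')\otimes_{K(X)}K(X_{/Y})$. Throughout write $F:=K(X)$ and $M:=K(X_{/Y})$, which is a field by hypothesis. I will also use that, since the local rings of $X$ are excellent, the completion map is a regular morphism, so the algebraic closure of $F$ in $M$ is \emph{separable} over $F$; this makes the constructive direction clean and, incidentally, shows the conclusion of (i) is never vacuous in positive characteristic.

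For (i) $\Rightarrow$ (ii) I would argue constructively. Let $p\in F[T]$ be the minimal polynomial of $\zeta$ and put $L:=F(\zeta)\cong F[T]/(p)$; by the remark above $p$ is separable, so $\zeta$ is a simple root. Let $f\colon X'\to X$ be the normalization of $X$ in $L$, a finite surjective morphism with $K(X')=L$, and set $t:=\zeta\in L=K(X')$. By Theorem \ref{(2.1.12)}, $K(X'_{/f^{-1}(Y)})\cong[L\otimes_F M]_0=[M[T]/(p)]_0$. Since $\zeta\in M$ is a simple root of $p$, the factor $T-\zeta$ of $p$ over $M$ occurs with multiplicity one, so it cuts out a connected component $Y'$ of $f^{-1}(Y)$ with $K(X'_{/Y'})\cong M[T]/(T-\zeta)=M$. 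Under this isomorphism both $\w f^*(\zeta)$, the image of $1\otimes\zeta$, and $\alpha_{X',Y'}(t)$, the image of $t\otimes1=T$ under $T\mapsto\zeta$, equal $\zeta$; hence $\w f^*(\zeta)=\alpha_{X',Y'}(t)$, which is (ii).

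The converse (ii) $\Rightarrow$ (i) is the substantive direction. Given the data of (ii), I would first normalize $X'$, which changes neither $K(X')$, $t$, nor the relation $\w f^*(\zeta)=\alpha_{X',Y'}(t)$, and then replace $Y'$ by the connected component $C$ of $f^{-1}(Y)$ containing it: since $X'$ is now normal and $C$ is connected, $K(X'_{/C})$ is a field by Proposition \ref{(2.1.11)}, so the restriction map $K(X'_{/C})\to K(X'_{/Y'})$ is injective and the equality $\w f^*(\zeta)=\alpha_{X',Y'}(t)$ already holds in $K(X'_{/C})$. Writing $E:=K(X')$ and $A:=E\otimes_F M$, Theorem \ref{(2.1.12)} exhibits the field $K(X'_{/C})$ as a residue field $\kappa(\mathfrak p)$ of $A$ at the minimal prime $\mathfrak p$ corresponding to $C$; the hypothesis then reads $t\otimes1-1\otimes\zeta\in\mathfrak p$, while $E$ and $M$ map isomorphically onto subfields of $A/\mathfrak p$ (their intersections with $\mathfrak p$ being $0$).

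It remains to show that any element lying in both $E$ and $M$ inside $A/\mathfrak p$ is algebraic over $F$; applied to the common image of $t$ and $\zeta$ this gives that $\zeta$ is algebraic over $F$, i.e. (i). This is the heart of the matter, and I expect it to be the main obstacle: it asserts that a genuinely transcendental formal-rational function can never be turned into a rational function by a proper surjective pullback. Suppose $\zeta$ were transcendental over $F$; then so is its image, hence so is $t$ in $E$, and $F(t)\otimes_F F(\zeta)$ is a domain (fraction field $F(t,\zeta)$) in which $t\otimes1-1\otimes\zeta$ is nonzero. The inclusion $F(t)\otimes_F F(\zeta)\hookrightarrow E\otimes_F M$ is faithfully flat, being a tensor product over $F$ of the faithfully flat field extensions $F(t)\hookrightarrow E$ and $F(\zeta)\hookrightarrow M$; by going-down the contraction of the minimal prime $\mathfrak p$ is a minimal prime of the domain $F(t)\otimes_F F(\zeta)$, namely $(0)$. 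But $t\otimes1-1\otimes\zeta$ lies in this contraction and is nonzero, a contradiction. Hence $\zeta$ is algebraic over $F$. The delicate points are thus the two reductions, which convert the hypothesis into a congruence modulo a minimal prime of $A$, together with the flat-descent fact that minimal primes contract to minimal primes; the separability of the algebraic part of $M$ over $F$ and the faithful flatness of the relevant tensor products are standard.
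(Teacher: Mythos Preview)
This statement is not proved in the present paper: Theorem~\ref{(2.1.23)} appears in Section~\ref{sec:2} (``Background material'') as a result quoted from \cite{BSch} and \cite[Thm.~9.21]{B}, with no proof given here. There is therefore nothing in the paper against which to compare your proposal.

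That said, your argument is along sensible lines and matches the spirit of how such statements are proved in the cited sources, namely by using Theorem~\ref{(2.1.12)} to convert the question into the commutative algebra of $K(X')\otimes_{K(X)}K(X_{/Y})$. The direction (ii)$\Rightarrow$(i), which is the substantive one, is handled cleanly: after your two reductions (normalize $X'$, enlarge $Y'$ to the connected component $C$ of $f^{-1}(Y)$ containing it, so that $K(X'_{/C})$ is a field by Proposition~\ref{(2.1.11)}), Theorem~\ref{(2.1.12)} identifies $K(X'_{/C})$ with a residue field of $E\otimes_F M$ at a minimal prime~$\mathfrak p$, and your faithful--flatness/going--down argument correctly forces $\zeta$ to be algebraic over $F$. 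For (i)$\Rightarrow$(ii), the one point that would benefit from a more careful justification is your separability claim (``the algebraic closure of $F$ in $M$ is separable over $F$''): excellence does give that completion is a regular morphism and hence separability of the induced residue/fraction field extensions, but $K(X_{/Y})$ is not literally the completion of a single local ring, so one should say a word about why the conclusion still holds (e.g.\ by reducing to the local ring at a generic point of $Y$). With that clarified, your construction of $X'$ as the normalization of $X$ in $F(\zeta)$ and of $Y'$ as the component corresponding to the factor $T-\zeta$ goes through.
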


\section{Extending formal-rational functions}
\label{sec:3}

Start with the following:

\begin{lemma}\label{CT} Under the above notation let $P=\mathbb P^m(e)\times\mathbb P^n(h)$ be the product of the weighted  projective spaces $\mathbb P^m(e)$ and $\mathbb P^n(h)$ over  $k$,  let $X$ be a closed irreducible subvariety of $P\times P$, and set $a:=\max\{m+\dim X_{24},n+\dim X_{13}\}$. 
\begin{enumerate}\item[\em i)]If $\dim X>a$ 
then $\dim X>m+n+1$, $\dim X_{13}>m$ and $\dim X_{24}>n$; 
\item[\em ii)] If $\dim X\geq a$ then $\dim X\geq m+n$, $\dim X_{13}\geq m$ and $\dim X_{24}\geq n$.\end{enumerate}\end{lemma}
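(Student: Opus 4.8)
The plan is to reduce both statements to a single elementary bound relating $\dim X$, $\dim X_{13}$ and $\dim X_{24}$, after which everything is arithmetic with integers.

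First I would record the one geometric input: the morphism
$(p_{13},p_{24})\colon P\times P\to(\mathbb P^m(e)\times\mathbb P^m(e))\times(\mathbb P^n(h)\times\mathbb P^n(h))$, being a mere reshuffling of the four factors of $P\times P$, is an isomorphism. Hence it carries $X$ isomorphically onto its image, and since that image has $X_{13}$ and $X_{24}$ as its two projections, it is contained in $X_{13}\times X_{24}$. Therefore
\begin{equation*}
\dim X\le\dim X_{13}+\dim X_{24}.\tag{$\ast$}
\end{equation*}

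For (i), write $d=\dim X$, $d_{13}=\dim X_{13}$, $d_{24}=\dim X_{24}$, and suppose $d>a$; since these are integers this means $d\ge a+1$. As $a\ge n+d_{13}$, combining with $(\ast)$ gives $n+d_{13}+1\le d\le d_{13}+d_{24}$, hence $d_{24}\ge n+1>n$; the symmetric computation, using $a\ge m+d_{24}$, gives $d_{13}\ge m+1>m$; and then $d\ge a+1\ge m+d_{24}+1\ge m+n+2>m+n+1$. For (ii) the argument is identical with non-strict inequalities: $d\ge a\ge n+d_{13}$ together with $(\ast)$ gives $d_{24}\ge n$, similarly $d_{13}\ge m$, and finally $d\ge a\ge m+d_{24}\ge m+n$.

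I do not expect any genuine obstacle here: the entire content sits in $(\ast)$, and the one point I would be careful to state correctly is that $(p_{13},p_{24})$ restricts to an isomorphism of $X$ onto a subvariety of $X_{13}\times X_{24}$ — equivalently, that the four factors of $P\times P$ may be regrouped as $(\mathbb P^m(e)\times\mathbb P^m(e))\times(\mathbb P^n(h)\times\mathbb P^n(h))$. It is perhaps worth remarking that, unlike the proof of Lemma \ref{CT"} above, this argument uses neither the crude bounds $\dim X_{13}\le 2m$ and $\dim X_{24}\le 2n$ nor any inequality relating $m$ and $n$.
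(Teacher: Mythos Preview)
Your proof is correct. The key inequality $(\ast)$, $\dim X\le\dim X_{13}+\dim X_{24}$, is exactly the geometric content the paper uses as well: there it appears implicitly, phrased as ``the restriction $q|F_p\colon F_p\to X_{24}$ is injective'' (so the general fibre of $p\colon X\to X_{13}$ has dimension at most $\dim X_{24}$), and is then fed into a contradiction argument. Your derivation of $(\ast)$ from the factor-reshuffling isomorphism $P\times P\cong(\mathbb P^m(e)\times\mathbb P^m(e))\times(\mathbb P^n(h)\times\mathbb P^n(h))$ is the same observation stated more cleanly, and once $(\ast)$ is in hand your direct arithmetic is shorter than the paper's reductio. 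So the two arguments coincide in substance; yours is a tidier packaging of the same idea.
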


\begin{proof} By the hypothesis that $\dim X>a$ we get  $\dim X>m+\dim X_{24}$ and $\dim X>n+\dim X_{13}$. Denote by $p\colon X\to X_{13}$ and $q\colon X\to X_{24}$ the two canonical (surjective) projections, and by $F_p$ and  $F_q$ the general fibers of $p$ and $q$ respectively. 

\medskip

i) By way of contradiction assume for instance   that $\dim X_{13}\leq m$. Then we get successively:
 \begin{alignat*}{2}
    \dim X_{24} &<\dim X-m   &\qquad
&\textup{(by $\dim X>m+\dim X_{24}$)}  \\
    &\leq\dim X-\dim X_{13} &    & \textup{(by  $\dim X_{13}\leq m$)} \\
    &= \dim F_p  &   &   \textup{(by the theorem on dimension of fibers)}\\
     &\leq \dim X_{24} &   &   \textup{(the restriction $q|F_p\colon F_p\to X_{24}$ is injective).} 
  \end{alignat*}
Thus the assumption that $\dim X_{13}\leq m$ leads to the contradiction that $\dim X_{24}<\dim X_{24}$. This proves that $\dim X_{13}>m$. In the same manner one proves that $\dim X_{24}>n$.
Finally, from $\dim X>n+\dim X_{13}$ and $\dim X_{13}\geq m+1$ we get  $\dim X>m+n+1$.

\medskip

ii) If instead $\dim X\geq m+\dim X_{24}$ and $\dim X\geq n+\dim X_{13}$ we may again assume, by way of contradiction,  that $\dim X_{13}<m$. Then we get successively:
 \begin{alignat*}{2}
    \dim X_{24} &\leq\dim X-m   &\qquad
&\textup{(by $\dim X\geq m+\dim X_{24}$)}  \\
    &<\dim X-\dim X_{13} &    & \textup{(by  $\dim X_{13}< m$)} \\
    &= \dim F_p  &   &   \textup{(by the theorem on dimension of fibers)}\\
     &\leq \dim X_{24} &   &   \textup{(the restriction $q|F_p\colon F_p\to X_{24}$ is injective).} 
  \end{alignat*}
Thus the assumption that $\dim X_{13}< m$ leads to the same contradiction as above. This proves that $\dim X_{13}\geq m$. In the same manner one proves that $\dim X_{24}\geq n$.
Finally, from $\dim X\geq n+\dim X_{13}$ and $\dim X_{13}\geq m$ we get  $\dim X\geq m+n$.
\qed\end{proof}

\medskip

Now we can strengthen part ii) of Theorem \ref{br}  above to get the main result of this paper:

\begin{theorem}\label{br'} Under the notation of the introduction, let  $f\colon X\to P\times P$ be a morphism from a complete irreducible variety $X$ over an algebraically closed field $k$ of arbitrary characteristic, with
$P=\mathbb P^m(e)\times\mathbb P^n(h)$, and let $\Delta_P$ be the diagonal of $P\times P$.  If  $\dim f(X)> a:=\max\{m+\dim X_{24},n+\dim X_{13}\}$ then $f^{-1}(\Delta_P)$ is $G3$ in $X$.\end{theorem}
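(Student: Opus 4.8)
The plan is to reduce the statement about a general morphism $f\colon X\to P\times P$ to the case of a closed embedding, and then to reduce the case of a weighted product of projective spaces to the case of ordinary projective spaces, where Theorem~\ref{debarre} applies. First I would replace $X$ by its image $f(X)$: by Corollary~\ref{(2.1.14)}, if $f(X)\cap\Delta_P$ is $G3$ in $f(X)$, then $f^{-1}(\Delta_P)=f^{-1}(f(X)\cap\Delta_P)$ is $G3$ in $X$ (note $f\colon X\to f(X)$ is proper and surjective). So it suffices to treat the case where $X\subseteq P\times P$ is a closed irreducible subvariety and $Y=X\cap\Delta_P$, with $\dim X>a$.

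Next I would pass from the weighted projective spaces to ordinary ones via the canonical finite surjective maps $\pi_e\colon\mathbb P^m\to\mathbb P^m(e)$ and $\pi_h\colon\mathbb P^n\to\mathbb P^n(h)$ (branched coverings coming from the grading), which combine to a finite surjective morphism $\Pi\colon\mathbb P^m\times\mathbb P^n\times\mathbb P^m\times\mathbb P^n\to P\times P$ carrying the diagonal $\Delta_{\mathbb P^m\times\mathbb P^n}$ onto $\Delta_P$. Let $X'$ be an irreducible component of $\Pi^{-1}(X)$ dominating $X$; then $\Pi|_{X'}\colon X'\to X$ is finite and surjective, and $\dim X'=\dim X$. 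Writing $X'_{13}$ and $X'_{24}$ for the images of $X'$ in $\mathbb P^m\times\mathbb P^m$ and $\mathbb P^n\times\mathbb P^n$, one checks that $\dim X'_{13}=\dim X_{13}$ and $\dim X'_{24}=\dim X_{24}$ (the relevant projections become finite after base change along the finite maps $\pi_e,\pi_h$). Hence $\dim X'>a'$ with $a'=\max\{m+\dim X'_{24},\,n+\dim X'_{13}\}=a$. By Lemma~\ref{CT}(i) applied to $X'$ we get $\dim X'>m+n+1$, $\dim X'_{13}>m$ and $\dim X'_{24}>n$, so Theorem~\ref{debarre} gives that $(\Pi|_{X'})^{-1}(\Delta_P)\cap X'=X'\cap\Delta_{\mathbb P^m\times\mathbb P^n}$ is $G3$ in $X'$.

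Now I would descend along the finite surjective morphism $g:=\Pi|_{X'}\colon X'\to X$. Set $Z:=g^{-1}(Y)=X'\cap\Delta_{\mathbb P^m\times\mathbb P^n}$, which we just showed is $G3$ in $X'$. By Theorem~\ref{(2.1.12)} there is a canonical isomorphism $K(X'_{/Z})\cong[K(X')\otimes_{K(X)}K(X_{/Y})]_0$; since $Z$ is $G3$ in $X'$, the left side is just $K(X')$, which is a finite field extension of $K(X)$. This forces $K(X_{/Y})$ to be a field, and moreover it shows $\alpha_{X,Y}\colon K(X)\to K(X_{/Y})$ is a finite (hence algebraic) extension — that is, $Y$ is at least $G2$ in $X$. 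To upgrade $G2$ to $G3$, it suffices to show $K(X)$ is algebraically closed in $K(X_{/Y})$; equivalently, by Theorem~\ref{(2.1.24)}, that $h^{-1}(Y)$ is connected for every proper surjective $h\colon X''\to X$. But that is exactly the content of Theorem~\ref{br}(ii): since $\dim X>a$, part ii) of that theorem gives connectivity of $h^{-1}(Y)$ after pulling back (applying Theorem~\ref{br} to the composite $X''\to X\hookrightarrow P\times P$, whose image still has dimension $>a$ since $h$ is surjective). Hence $K(X)$ is algebraically closed in $K(X_{/Y})$ and, combined with $G2$, we conclude $K(X)=K(X_{/Y})$, i.e. $Y$ is $G3$ in $X$.

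The main obstacle I expect is the dimension bookkeeping in the second step: one must verify carefully that passing to a suitable component $X'$ of $\Pi^{-1}(X)$ preserves all three dimensions $\dim X$, $\dim X_{13}$, $\dim X_{24}$, so that the hypothesis $\dim X>a$ transports to the hypothesis $\dim X'>a'$ needed to invoke Theorem~\ref{debarre}. This requires knowing that the finite maps $\pi_e,\pi_h$ are surjective with finite fibers and that the formation of $X_{13}$, $X_{24}$ commutes appropriately with the finite base change — which is where the explicit structure of weighted projective spaces (as quotients, cf. \cite{Do} or \cite{BR}) enters. Everything else is a formal consequence of the transfer results of Hironaka–Matsumura and Bădescu–Schneider recalled in Section~\ref{sec:2}.
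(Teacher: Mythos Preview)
Your overall strategy matches the paper's proof almost exactly: reduce to a closed embedding via Corollary~\ref{(2.1.14)}, lift to ordinary projective spaces through the finite cover $\Pi$, invoke Lemma~\ref{CT}(i) and Theorem~\ref{debarre} upstairs, and then descend using the Hironaka--Matsumura formalism together with Theorem~\ref{br}(ii) and Theorem~\ref{(2.1.24)}.

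There is, however, one genuine gap in your descent step. You set $Z:=g^{-1}(Y)$ and assert $Z=X'\cap\Delta_{\mathbb P^m\times\mathbb P^n}$. This equality is false in general: since $\pi_e$ and $\pi_h$ are not injective when the weights are nontrivial, the preimage $\Pi^{-1}(\Delta_P)$ is the fibre product $(\mathbb P^m\times_{\mathbb P^m(e)}\mathbb P^m)\times(\mathbb P^n\times_{\mathbb P^n(h)}\mathbb P^n)$, which strictly contains $\Delta_{P'}$. Hence $g^{-1}(Y)=X'\cap\Pi^{-1}(\Delta_P)\supsetneq X'\cap\Delta_{P'}$ in general, and Theorem~\ref{debarre} only tells you the \emph{smaller} set $X'\cap\Delta_{P'}$ is $G3$ in $X'$. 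Your application of Theorem~\ref{(2.1.12)} therefore does not directly give $K(X'_{/Z})=K(X')$.

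The paper sidesteps this by \emph{not} using Theorem~\ref{(2.1.12)} at this point. Instead it takes $Y'=X'\cap\Delta_{P'}$ and uses only the inclusion $v(Y')\subseteq Y$ to form the commutative square of Definition~\ref{oo}, obtaining a map $\tilde v^*\colon K(X_{/Y})\to K(X'_{/Y'})=K(X')$. The paper then proves \emph{separately}, via Proposition~\ref{(2.1.11)} and Theorem~\ref{br}(ii), that $K(X_{/Y})$ is a field; once that is known, $\tilde v^*$ is automatically injective and the finiteness of $K(X')|K(X)$ yields $G2$. Your argument can be repaired in the same way, or alternatively by invoking the (standard but not listed here) monotonicity fact that a closed subvariety containing a $G3$ subvariety is itself $G3$, which would give $g^{-1}(Y)$ is $G3$ from $X'\cap\Delta_{P'}\subseteq g^{-1}(Y)$.
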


\begin{proof} By Corollary \ref{(2.1.12)} applied to the proper surjective morphism $f\colon X\to f(X)$, it is enough to prove that $f(X)\cap\Delta_P$ if $G3$ in $f(X)$. In other words, replacing $X$ by $f(X)$ we may assume that $X$ is a closed subset of $P\times P$ of dimension  $>a$ and then we have to prove that $X\cap\Delta_P$ is $G3$ in $X$.

Let $P':=\mathbb P^m\times\mathbb P^n$ be the product of two ordinary projective spaces of dimension $m$ and $n$ respectively. Then we have the canonical finite surjective morphisms $u_m(e)\colon\mathbb P^m\to\mathbb P^m(e)$ and $u_n(h)\colon\mathbb P^n\to\mathbb P^n(h)$. It follows that the morphism 
$$u:=u_m(e)\times u_n(h)\times u_m(e)\times u_n(h)\colon P'\times P'\to P\times P,$$ 
is finite and surjective. Choose an irreducible component $X'$ of $u^{-1}(X)$ and denote by $v\colon X'\to X$ the restriction $u|X'$. Clearly, $v$ is again a finite surjective morphism, and in particular, $\dim X'=\dim X$. Then it makes sense to define the irreducible subvarieties $X'_{13}\subseteq\mathbb P^m\times\mathbb P^m$ and $X'_{24}\subseteq\mathbb P^n\times\mathbb P^n$. Since the morphisms 
$$(u_m(e)\times u_m(e))|X'_{13}\colon X'_{13}\to X_{13}\;\,\text{and} \;\,(u_n(h)\times u_n(h))|X'_{24}\colon X'_{24}\to X_{24}$$ 
are finite and surjective we infer that $\dim X'_{13}=\dim X_{13}$ and $\dim X'_{24}=\dim X_{24}$, we get  
{\small\begin{equation}\label{ineg}\dim X'>a=\max\{m+\dim X_{24},n+\dim X_{13}\}=\max\{m+\dim X'_{24},n+\dim X'_{13}\}.\end{equation}}
Then by Lemma  \ref{CT}, i), the inequality \eqref{ineg} yield the following inequalities
{\small\begin{equation}\label{ineg'} \dim X'>m+n+1, \;\,\dim X'_{13}>m \;\, \text{and}\; \dim X'_{24}>n.\end{equation}}
Now, the inequalities  \eqref{ineg'} show that the hypotheses of Theorem \ref{debarre} above  are satisfied for  the inclusion $X'\subset P'\times P'$. Therefore by Theorem \ref{debarre} it follows that $X'\cap\Delta_{P'}$ is $G3$ in $X'$. 
Now the idea is to show that, in our situation, this last fact implies that $X\cap\Delta_P$ is $G3$ in $X$ as well. 

Indeed, consider the following commutative diagram
\begin{equation}\label{abc}
\begin{CD}
K(X)@>v^*>> K(X')\\
@V\alpha_{X,X\cap\Delta_P} VV @ VV\alpha_{X',X'\cap\Delta_{P'}}V\\
K(X_{/X\cap\Delta_P})@ >\w v^*>> K(X'_{/X'\cap\Delta_{P'}})\\
\end{CD}
\end{equation}
in which the second vertical map is an isomorphism (because $X'\cap\Delta_{P'}$ is $G3$ in $X'$) and the first horizontal map yields a finite field extension (because the morphism $v$ is finite). In particular, via the injective map 
$\alpha_{X',X'\cap\Delta_{P'}}\circ v^*$, $K(X'_{/X'\cap\Delta_{P'}})$ becomes a finite field extension of $K(X)$.

On the other hand, we claim that the ring $K(X_{/X\cap\Delta_P})$ of formal-rational functions of $X$ along $X\cap\Delta_P$ is actually a field. Indeed by Proposition \ref{(2.1.11)} we have to check that if $f\colon\tilde{X}\to X$  is the
birational normalization of $X$, then $f^{-1}(X\cap\Delta_P)$ is connected. But the connectivity of $f^{-1}(X\cap\Delta_P)$ follows from Theorem \ref{br}. So 
$K(X_{/X\cap\Delta_P})$ field, and hence it can be identified with a subfield of $K(X'_{/X'\cap\Delta_{P'}})$ which contains $K(X)$. By the commutativity of 
diagram \eqref{abc} we get $$\w v^*\circ\alpha_{X,X\cap\Delta_P}=\alpha_{X',X'\cap\Delta_{P'}}\circ v^*,$$ so that the field extension $K(X_{/X\cap\Delta_P})|K(X)$ becomes a field subextension of the finite field extension $K(X'_{/X'\cap\Delta_{P'}})|K(X)$. It follows that the field extension  $K(X_{/X\cap\Delta_P})|\newline K(X)$ is finite, i.e.  $X\cap\Delta_P$ is $G2$ in $X$. 

It remains to see that the map $\alpha_{X,X\cap\Delta_P}$ is an isomorphism. Under our assumption that $\dim X>a=\max\{m+\dim X_{24},n+\dim X_{13}\}$, we can apply Theorem \ref{br}, ii)
to get that the condition i) of Theorem \ref{(2.1.24)} is satisfied for the pair $(X,X\cap\Delta_P)$. Then by Theorem \ref{(2.1.24)} above, this condition is 
equivalent to saying that the subfield $K(X)$ is algebraically closed in $K(X_{/X\cap\Delta_P})$.  Recalling also that the field extension 
$K(X_{/X\cap\Delta_P})|K(X)$ is finite (and hence algebraic) we get that the map $\alpha_{X,X\cap\Delta_P}$ is an isomorphism, i.e.  $X\cap\Delta_P$ is $G3$ in $X$.\qed\end{proof}

\begin{corollary}\label{small} Let  $f\colon X\to P\times P$ be a morphism from a complete irreducible variety $X$, with  $P=\mathbb P^m(e)\times \mathbb P^n(h)$,  such that $m\geq n\geq 1$ and $\codim_{P\times P}f(X)< n$. Then $f^{-1}(\Delta)$ is $G3$ in $X$.
\end{corollary}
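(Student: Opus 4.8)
The plan is to deduce Corollary~\ref{small} from Theorem~\ref{br'} by reducing it to a statement about the numbers $m+\dim X_{24}$ and $n+\dim X_{13}$. First I would pass from the abstract morphism $f$ to its image: since Corollary~\ref{(2.1.14)} (applied to $f\colon X\to f(X)$) shows that $f^{-1}(\Delta_P)$ is $G3$ in $X$ as soon as $f(X)\cap\Delta_P$ is $G3$ in $f(X)$, I may replace $X$ by $f(X)$ and assume $X$ is a closed irreducible subvariety of $P\times P$ with $\codim_{P\times P}X<n$, equivalently $\dim X>2(m+n)-n=2m+n$. So the goal becomes: $\dim X>2m+n$ implies $X\cap\Delta_P$ is $G3$ in $X$.

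The key point is then purely numerical: I must check the hypothesis $\dim X>a=\max\{m+\dim X_{24},\,n+\dim X_{13}\}$ of Theorem~\ref{br'}. Here $X_{13}\subseteq\mathbb P^m(e)\times\mathbb P^m(e)$ and $X_{24}\subseteq\mathbb P^n(h)\times\mathbb P^n(h)$, so $\dim X_{13}\le 2m$ and $\dim X_{24}\le 2n$, whence $a\le\max\{m+2n,\,n+2m\}=2m+n$ using $m\ge n$. This is exactly Lemma~\ref{CT"}, ii): from $\dim X>2m+n$ we get $\dim X>a$. Then Theorem~\ref{br'} applies directly and gives that $X\cap\Delta_P$ is $G3$ in $X$, completing the argument.

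I do not expect a genuine obstacle here: the corollary is essentially a repackaging of Theorem~\ref{br'} via the codimension bound, with Lemma~\ref{CT"} doing the only real work. The one thing to be careful about is the translation between the codimension condition and the dimension condition: $\codim_{P\times P}f(X)<n$ means $\dim(P\times P)-\dim f(X)<n$, and since $\dim(P\times P)=2(m+n)$ this reads $\dim f(X)>2(m+n)-n=2m+n$, which is precisely what is needed to feed into Lemma~\ref{CT"}. Everything else is immediate from the cited results.

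\begin{proof} By Corollary~\ref{(2.1.14)} applied to the proper surjective morphism $f\colon X\to f(X)$, it is enough to prove that $f(X)\cap\Delta_P$ is $G3$ in $f(X)$; so, replacing $X$ by $f(X)$, we may assume that $X$ is a closed irreducible subvariety of $P\times P$ with $\codim_{P\times P}X<n$, and we must show that $X\cap\Delta_P$ is $G3$ in $X$. Since $\dim(P\times P)=2(m+n)$, the hypothesis $\codim_{P\times P}X<n$ is equivalent to $\dim X>2(m+n)-n=2m+n$. By Lemma~\ref{CT"}, ii) (which uses only that $m\geq n\geq 1$, together with the bounds $\dim X_{13}\leq 2m$ and $\dim X_{24}\leq 2n$), we conclude that $\dim X>a:=\max\{m+\dim X_{24},n+\dim X_{13}\}$. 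Hence Theorem~\ref{br'} applies to the inclusion $X\subset P\times P$ and yields that $X\cap\Delta_P$ is $G3$ in $X$. \qed\end{proof}
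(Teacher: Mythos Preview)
Your proof is correct and follows essentially the same route as the paper: translate $\codim_{P\times P}f(X)<n$ into $\dim f(X)>2m+n$, use the bound $a\le 2m+n$ from Lemma~\ref{CT"} (equivalently the numerical content of Corollary~\ref{CT'}), and then apply Theorem~\ref{br'}. The initial reduction to the closed embedding via Corollary~\ref{(2.1.14)} is harmless but unnecessary, since Theorem~\ref{br'} already applies directly to the morphism $f\colon X\to P\times P$ and concludes that $f^{-1}(\Delta_P)$ is $G3$ in $X$.
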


\begin{proof} This follows from Theorem \ref{br'} and Corollary \ref{CT'}.\qed\end{proof}

\begin{remark} { In the special case when $P=\mathbb P^m\times\mathbb P^n$ is a product of two ordinary projective spaces over the  field $\mathbb C$ of complex numbers, Corollary 1  also follows
(via Corollary \ref{(2.1.14)})  from an old general result of Faltings (see \cite{F}, Satz 8, page 161) proved in the case when $P$ is a complex projective rational homogeneous space. In general $P=\mathbb P^m(e)\times\mathbb P^n(h)$ is  singular, so that Corollary 1 (to our best knowledge) is new.}\end{remark}

\begin{corollary}\label{small'} Let $X$ and $Y$ be two closed  irreducible subvarieties of $P=\mathbb P^m(e)\times\mathbb P^n(h)$  such that  $m\geq n\geq 1$ and $\dim X+\dim Y>
2m+n$. Then $X\cap Y$ is $G3$ in $X$ and in $Y$.\end{corollary}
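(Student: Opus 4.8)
The plan is to reduce the statement to Corollary \ref{small} by constructing an appropriate morphism into $P\times P$ whose image is related to $X\cap Y$. Given the two closed irreducible subvarieties $X,Y\subseteq P=\mathbb P^m(e)\times\mathbb P^n(h)$, I would consider the product $X\times Y$ and the inclusion morphism $g\colon X\times Y\hookrightarrow P\times P$. This is a morphism from a complete irreducible variety, and $g^{-1}(\Delta_P)$ is exactly the ``diagonal copy'' of $X\cap Y$ inside $X\times Y$; in particular $g^{-1}(\Delta_P)$ is isomorphic as a scheme to $X\cap Y$. Since $g$ is a closed embedding, $g(X\times Y)=X\times Y$ has dimension $\dim X+\dim Y$.

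Next I would verify the numerical hypothesis of Corollary \ref{small} for the morphism $g$. We have $\dim(X\times Y)=\dim X+\dim Y>2m+n$, while $\dim(P\times P)=2(m+n)$, so $\codim_{P\times P}(X\times Y)=2(m+n)-(\dim X+\dim Y)<2(m+n)-(2m+n)=n$. Hence $\codim_{P\times P}g(X\times Y)<n$ with $m\ge n\ge 1$, so Corollary \ref{small} (equivalently Theorem \ref{br'} combined with Corollary \ref{CT'}) applies and gives that $g^{-1}(\Delta_P)$ is $G3$ in $X\times Y$. Under the identification $g^{-1}(\Delta_P)\cong X\cap Y$, this already says $X\cap Y$ is $G3$ in $X\times Y$.

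The remaining, and I expect slightly delicate, step is to pass from ``$X\cap Y$ is $G3$ in $X\times Y$'' to ``$X\cap Y$ is $G3$ in $X$'' (and symmetrically in $Y$). For this I would use the first projection $\pi\colon X\times Y\to X$, which is a proper surjective morphism of irreducible varieties with $\pi^{-1}(X\cap Y)$ containing the diagonal copy $g^{-1}(\Delta_P)$ mapped isomorphically to $X\cap Y\subseteq X$. One cannot apply Corollary \ref{(2.1.14)} directly in this direction (that corollary goes from the base to the total space, not the other way), so instead I would invoke a standard comparison: a section $s\colon X\to X\times Y$ of $\pi$ — or more simply, the observation that $X\cap Y$ sits as a closed subvariety of $X$ and the formal completion $(X\times Y)_{/(X\cap Y)_{\mathrm{diag}}}$ maps to $X_{/X\cap Y}$ compatibly. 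The cleanest route is probably: since being $G3$ is preserved under restriction to a closed subvariety through which the formal neighborhood ``sees'' enough, one reduces to noting that $X\cap Y$, as a subscheme of $X$, has its ring of formal-rational functions $K(X_{/X\cap Y})$ receiving a map from $K((X\times Y)_{/X\cap Y})$; combined with $K(X)\hookrightarrow K(X\times Y)$ and the already-established isomorphism $K(X\times Y)\xrightarrow{\sim}K((X\times Y)_{/X\cap Y})$, a diagram chase forces $\alpha_{X,X\cap Y}$ to be an isomorphism. The main obstacle is making this last diagram chase rigorous — i.e.\ identifying the correct functoriality of formal-rational function rings under the projection and its partial sections — which is exactly the kind of argument already used in the proof of Theorem \ref{br'}, so I would model it on that proof, using Theorem \ref{(2.1.12)} and Corollary \ref{(2.1.14)} applied to $\pi$ together with the fact that $K(X\times Y)$ is a purely transcendental (in particular, the relevant algebraic closure) extension of $K(X)$.
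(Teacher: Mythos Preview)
Your overall strategy---embed $X\times Y$ in $P\times P$ and apply Corollary~\ref{small} to conclude that $(X\times Y)\cap\Delta_P$ is $G3$ in $X\times Y$---is exactly what the paper does. The gap is entirely in your descent step from $X\times Y$ to $X$, which you yourself flag as ``slightly delicate'' but do not actually carry out.

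Two concrete problems. First, the claim that $K(X\times Y)$ is purely transcendental over $K(X)$ is false in general (take $Y$ non-rational); what is true, and what would be useful, is the weaker fact that $K(X)$ is algebraically closed in $K(X\times Y)$. Second, even granting that, the commutative square with right vertical arrow an isomorphism does \emph{not} by itself force the left vertical $\alpha_{X,X\cap Y}$ to be surjective: you would still need to know that $K(X_{/X\cap Y})$ is a field (so that $\widetilde{p_1}^{\,*}$ is injective) and that every $\zeta\in K(X_{/X\cap Y})$ is algebraic over $K(X)$. Neither follows from a bare diagram chase, and your invocation of Theorem~\ref{(2.1.12)} applied to $\pi$ does not help, since $\pi^{-1}(X\cap Y)=(X\cap Y)\times Y$ is strictly larger than $(X\times Y)\cap\Delta_P$.

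The paper closes this gap with two specific ingredients you omit. For algebraicity of an arbitrary $\zeta\in K(X_{/X\cap Y})$ it uses Theorem~\ref{(2.1.23)}: the morphism $p_1\colon X\times Y\to X$ with $Y'=(X\times Y)\cap\Delta_P$ sends $\zeta$ into $K(X\times Y)$, which is exactly condition~ii) there. For algebraic closedness of $K(X)$ in $K(X_{/X\cap Y})$ (and for $K(X_{/X\cap Y})$ being a field) it proves the universal connectivity condition of Theorem~\ref{(2.1.24)}: for \emph{every} proper surjective $f\colon Z\to X$, one looks at $f\times\id_Y\colon Z\times Y\to X\times Y$, pulls back the $G3$ property via Corollary~\ref{(2.1.14)}, and reads off that $(f\times\id_Y)^{-1}((X\times Y)\cap\Delta_P)\cong f^{-1}(X\cap Y)$ is connected. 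This $f\times\id_Y$ trick is the missing idea in your sketch.
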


\begin{proof}   Let $p_1\colon X\times Y\to X$ be the first projection of $X\times Y$.  Then we get the commutative diagram:
$$\begin{CD}
(X\times Y)\cap \Delta_P@>\subset>> X\times Y\\
@V\cong VV @ VVp_1V\\
X\cap Y@ >\subset>> X\\
\end{CD}$$
yields the  following commutative diagram
\begin{equation}\label{sd}\begin{CD}
K(X)@>p_1^*>> K(X\times Y)\\
@V\alpha_{X,X\cap Y} VV @ VV\alpha_{X\times Y,(X\times Y)\cap\Delta_P} V\\
K(X_{/X\cap Y})@ >\w p_1^*>> K(X\times Y_{/(X\times Y)\cap\Delta_P})\\
\end{CD}\end{equation}
 As $\dim X\times Y=\dim X+\dim Y>2m+n$, by Corollary \ref{small} we get that $(X\times Y)\cap\Delta_P$ is $G3$ in $X\times Y$, so that the second vertical arrow of diagram 
\eqref{sd} is an isomorphism. 

On the other hand, we claim that for every proper surjective morphism $f\colon Z\to X$, $f^{-1}(X\cap Y)$ is connected. Indeed, since the morphism $f\times\id_Y\colon Z\times Y\to X\times Y$ is proper and surjective (because $f\colon Z\to X$ is so), and since $(X\times Y)\cap\Delta_P$ is $G3$ in $X\times Y$, by Corollary \ref{CT'},
$(f\times\id_Y)^{-1}((X\times Y)\cap\Delta_P)$ is $G3$ in $Z\times Y$. It follows in particular that $(f\times\id_Y)^{-1}((X\times Y)\cap\Delta_P)$ is connected. As $(f\times\id_Y)^{-1}((X\times Y)\cap\Delta_P)$ 
is biregularly isomorphic to $f^{-1}(X\cap Y)$, the claim is proved. 

The claim implies the following two things: 

\begin{enumerate}\item[ i)] $K(X_{/X\cap Y})$ is a field (by Proposition \ref{(2.1.11)}), and 
  \item[ ii)] $K(X)$ is algebraically closed in $K(X_{/X\cap Y})$ (by Theorem \ref{(2.1.24)}). \end{enumerate}

Now we can easily prove that $X\cap Y$ is $G3$ in $X$. Indeed, if not, there would exist a formal-rational function $\zeta\in K(X_{/X\cap Y})$ such that $\zeta\not\in K(X)$. Then by diagram \eqref{sd} (with the second vertical arrow isomorphism) and by Theorem \ref{(2.1.23)} it would follow that in the field extension $K(X_{/X\cap Y})|K(X)$ the function $\zeta\in K(X_{/X\cap Y})$ would be an algebraic element over $K(X)$ non belonging to $K(X)$, and this would contradict i) and ii) above.

Similarly one proves that $X\cap Y$ is $G3$ in $Y$.\qed\end{proof}

\begin{remark} Corollary \ref{small'} extends to the case when $P=\mathbb P^m(e)\times\mathbb P^n(h)$ an old result of Faltings \cite{F2} (see Corollary 3, page 102) regarding the case when $P=\mathbb P^n$. Our proof (based on  global arguments) is different from Faltings' proof (which uses local methods).
\end{remark}

\begin{corollary}\label{final} Under the hypotheses of Corollary $\ref{small'}$ assume that $k$ is the field of complex numbers. Then every meromorphic function defined on a complex connected open neighborhood $U$ of $X\cap Y$ in $X$ extends to a rational function in $X$.\end{corollary}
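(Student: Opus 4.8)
The plan is to deduce Corollary~\ref{final} from Corollary~\ref{small'} together with the classical comparison result of Hironaka--Matsumura relating formal-rational functions and meromorphic functions on complex analytic neighborhoods. The point is that the $G3$ property, which is an algebraic statement about formal-rational functions, transfers into a statement about honest meromorphic functions on a complex neighborhood once one knows a suitable ``GAGA-type'' dictionary for formal functions.

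First I would recall the relevant analytic-to-formal principle: if $X$ is a complete complex variety and $Y\subset X$ a connected closed subvariety, then any meromorphic function defined on a connected open neighborhood $U$ of $Y$ in $X$ (in the Euclidean topology) induces, by restriction to the formal completion $X_{/Y}$, a formal-rational function; moreover, since $Y$ is connected and $K(X_{/Y})$ was already shown to be a field in the proof of Corollary~\ref{small'}, this gives a well-defined ring homomorphism from the ring $\mathcal{M}(U)$ of meromorphic functions near $Y$ into $K(X_{/Y})$, compatible with the inclusion of rational functions $K(X)\hookrightarrow\mathcal{M}(U)$ and $K(X)\hookrightarrow K(X_{/Y})$. (This is exactly the setting of Hironaka--Matsumura; one can also invoke \cite{HM} directly, where the analytic case is treated.)

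Next I would apply Corollary~\ref{small'}: under the hypotheses $m\ge n\ge 1$ and $\dim X+\dim Y>2m+n$, the intersection $X\cap Y$ is $G3$ in $X$, i.e. the map $\alpha_{X,X\cap Y}\colon K(X)\to K(X_{/X\cap Y})$ is an isomorphism. Combined with the previous paragraph, given a meromorphic function $\varphi$ on a connected neighborhood $U$ of $X\cap Y$ in $X$, its image in $K(X_{/X\cap Y})$ lies in the image of $\alpha_{X,X\cap Y}$, hence equals $\alpha_{X,X\cap Y}(r)$ for a unique $r\in K(X)$. Then $\varphi$ and $r$ are two meromorphic functions on a (possibly smaller) connected neighborhood of $X\cap Y$ having the same formal expansion along $X\cap Y$; by the faithfulness of the passage to formal completions (an analytic function vanishing to infinite order along a subvariety of a connected domain vanishes identically near it), it follows that $\varphi=r$ on a neighborhood of $X\cap Y$, so $\varphi$ extends to the rational function $r$ on $X$.

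The main obstacle is the first step: rigorously establishing the comparison homomorphism $\mathcal{M}(U)\to K(X_{/X\cap Y})$ and its injectivity-type properties, since $X$ is in general singular (being a subvariety of the singular variety $P=\mathbb P^m(e)\times\mathbb P^n(h)$), so one cannot naively appeal to statements phrased for smooth varieties. I expect this is handled by passing to the normalization $\w X\to X$, on which $f^{-1}(X\cap Y)$ is connected (as already used in the proof of Corollary~\ref{small'}) and where the classical analytic arguments of \cite{HM} apply directly, and then descending along the finite birational map using Proposition~\ref{(2.1.11)} and Theorem~\ref{(2.1.12)}; alternatively one simply cites the analytic part of \cite{HM} or \cite{B}, Chapter~9, which is stated in the needed generality. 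Once the dictionary is in place, the rest is a formal consequence of the $G3$ conclusion of Corollary~\ref{small'}.
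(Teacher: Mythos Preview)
Your approach is correct and is essentially the same as the paper's: the paper's proof is a two-line argument citing \cite{B}, page 117, for the chain of inclusions $K(X)\subseteq\mathscr M(U)\subseteq K(X_{/X\cap Y})$ and then invoking Corollary~\ref{small'}. Your additional discussion of injectivity, faithfulness, and the singular case is exactly what is packaged into that citation, so your argument is just a more unpacked version of the same idea.
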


\begin{proof} Let $\mathscr M(U)$ denote the field of meromorphic functions on $U$. By \cite{B}, page 117, $K(X)\subseteq\mathscr M(U)\subseteq K(X_{X\cap Y})$, and then apply Corollary \ref{small'}.   \qed\end{proof}

{\small
}

\end{document}